\title{Squarefree monomial modules and \\ extremal Betti numbers}
\newtheorem{Lem}{Lemma}[section]
\newtheorem{Prop}[Lem]{Proposition}
\newtheorem{Cor}[Lem]{Corollary}
\newtheorem{Thm}[Lem]{Theorem}
\theoremstyle{definition}
\newtheorem{Def}[Lem]{Definition}
\newtheorem{Rem}[Lem]{Remark}
\newtheorem{Expl}[Lem]{Example}
\newtheorem{Expls}[Lem]{Examples}
\newtheorem{Constr}[Lem]{Construction}
\newcommand\pf{\begin{proof}}
\newcommand\epf{\end{proof}}
\def\NZQ{\Bbb}               
\def\NN{{\NZQ N}}
\def\ZZ{{\NZQ Z}}
\newcommand\indeg{{\operatorname{indeg}}}
\newcommand\supp{{\operatorname{supp}}}
\newcommand\m{{\operatorname{m}}}
\newcommand\bb{\bold{b}}
\newcommand\bd{\bold{ds}}
\newcommand\bl{\bold{dl}}
\numberwithin{equation}{section}
\begin{document}
\begin{flushright}
\emph{To appear in Algebra Colloquium}
\end{flushright}
\bigskip

\maketitle
\centerline{\scshape Marilena Crupi \and Carmela Ferr\`{o} }
\medskip
{\footnotesize
\centerline{University of Messina, Department of Mathematics and Computer Science}
\centerline{Viale Ferdinando Stagno d'Alcontres, 31}
\centerline{98166 Messina, Italy}
\centerline{E-mail: \email{mcrupi@unime.it; cferro@unime.it}}

}

\maketitle
\begin{abstract}
Let $K$ be a field and let $S = K[x_1,\dots,x_n]$ be a
polynomial ring over $K$. Let $F = \oplus_{i=1}^r Se_i$ be a
finitely generated graded free $S$-module with basis $e_1, \dots,
e_r$ in degrees $f_1, \dots, f_r$ such that
$f_1 \leq f_2 \leq \dots \leq f_r$. We examine some classes of squarefree monomial submodules of $F$. Hence, we focalize our attention on the Betti table of such classes in order to analyze the behavior of their
extremal Betti numbers.\\
{\bf 2010 Mathematics Subject Classification:} 05E40, 13B25, 13D02 16W50.\\
{\bf Keywords:} graded modules, squarefree monomial modules, minimal graded resolutions.

\end{abstract}

\section*{Introduction}
Let $K$ be a field and let $S = K[x_1,\dots,x_n]$ be the
polynomial ring in $n$ variables with coefficients in $K$. Let $F = \oplus_{i=1}^r Se_i$ be a finitely generated graded free
$S$-module with basis $e_1, \dots, e_r$ in degrees $f_1, \dots,
f_r$, renumbered as necessary so that $f_1 \leq f_2 \leq \dots
\leq f_r$.

A \textit{monomial submodule} $M$ of $F$ is a submodule generated by
monomials, \textit{i.e.}, $M = \oplus_{i=1}^r I_i e_i$, where $I_i$
are the monomial ideals of $S$ generated by those monomials $m$
of $S$ such that $me_i \in M$. A monomial submodule $M = \oplus_{i=1}^r I_i e_i \subsetneq F$ is a \textit{squarefree monomial submodule} if every $I_i$ is a squarefree monomial ideal of $S$.

If $F=S$, then a squarefree monomial submodule of $F$ is a squarefree monomial ideal of $S$. The class of squarefree monomial ideals has been studied from viewpoint of commutative algebra and combinatorics (see, for example \cite{AHH2, AHH3}).

In this paper, we are interested in the study of some classes of squarefree monomial submodules: \textit{squarefree stable submodules}, \textit{squarefree strongly stable submodules} and \textit{squarefree lexicographic submodules}.

As in the theory of squarefree monomial ideals, we have the following hierarchy of squarefree monomial submodules: squarefree lexicographic submodule $\Rightarrow$ squarefree strongly stable submodule $\Rightarrow$ squarefree stable submodule.

In \cite{AHH2}, Aramova, Herzog and Hibi constructed the explicit minimal graded free resolution of a squarefree stable ideal, similar to the Eliahou-Kervaire resolution of a stable ideal \cite{EK}, and stated a formula for computing the
Betti numbers of such class of ideals. Such formula is a fundamental tool if one wants to study the Betti table of a squarefree stable submodule.

If $M$ is a finitely generated graded $S$-module, a Betti number $\beta_{k,k+\ell}(M) \neq 0$ is called
{\it extremal} if $\beta_{i, i+j}(M) = 0$ for all $i \geq k$, $j
\geq \ell$, $(i, j) \neq (k, \ell)$.

The extremal Betti numbers were
introduced by Bayer, Charalambous and Popescu in \cite{BCP} as a refinement of two invariants of the module $M$: the projective
dimension and the Castelnuovo-Mumford regularity.

Indeed, if $\beta_{k_1,k_1+\ell_1}(M), \ldots,
\beta_{k_t,k_t+\ell_t}(M),\,\,\,k_1 > \cdots > k_t, \ell_1 <\cdots <
\ell_t$, are all extremal Betti numbers of $M$, then $k_1 =
\textrm{projdim}_S(M)$ and $\ell_t = \textrm{reg}_S(M)$.

The behavior of the extremal Betti numbers for some
classes of monomial ideals in polynomial rings in a finite
numbers of variables over a field was studied by Crupi and Utano in \cite{CU1,CU2}.
Subsequently, the same authors \cite{CU3} examined such special
graded Betti numbers for a lexicographic submodule $M$ of a
finitely generated graded free $S$-module.

In this paper we devote our attention on the extremal Betti numbers of a squarefree stable submodule. We characterize the extremal Betti numbers of such class of monomial submodules and give a criterion for determining their positions and their number.

The plan of the paper is the following. In Section \ref{pre}, some notions that  will be used throughout the
paper are recalled. In Section \ref{classes}, the classes of squarefree stable submodules, squarefree strongly stable submodules and
squarefree lexicographicsubmodules are examined and some relevant properties discussed.
In Section \ref{extr}, the behavior of the extremal Betti numbers for the squarefree stable submodules is studied; the characterization of these invariants is the main result. In Section \ref{crit}, given a squarefree stable submodule $M$ of the free $S$-module $S^r$, $r\geq 1$, a criterion to recognize among all the graded Betti numbers of $M$ those extremal
ones is given.
Section \ref{appl} contains an application on the so called \textit{super extremal Betti numbers} of a squarefree lexicographic submodule.

\section{Preliminary and notation} \label{pre}
Throughout this paper, let $S = K[x_1,\dots,x_n]$ be the polynomial ring in $n$ variables over a field
$K$ and $F = \oplus_{i=1}^r Se_i$ a finitely generated graded free
$S$-module with basis $e_1, \dots, e_r$ in degrees $f_1, \dots,
f_r$ such that $f_1 \leq f_2 \leq \dots
\leq f_r$. We consider $S$ as an $\NN$-graded ring and each $\deg x_i$ =
$1$.

The elements of the form $x^ae_i$, where $x^a =
x_1^{a_1} x_2^{a_2} \dots x_n^{a_n}$ for $a= (a_1, \dots, a_n)\in
\NN_0^n $, are called \textit{monomials} of $F$.

A monomial $me_i \in F$ is called a \textit{squarefree monomial} of $F$ if $m$ is a squarefree monomial of $S$, \textit{i.e.}, $m=x_{i_1}x_{i_2}\cdots x_{i_d}$
with $1\leq i_1<i_2< \cdots < i_d \leq n.$

For every monomial $me_i \in F$, we set
\[
\deg(m e_i) = \deg(m) + \deg(e_i).
\]

In particular if $F \simeq S^n$ and $e_i = (0, \dots, 0, 1,0,\dots, 0)$, where $1$ appears in the $i$-th place, one has
\[
\deg(m e_i) = \deg(m).
\]

A \textit{monomial submodule} $M$ of $F$ is a submodule generated by
monomials, \textit{i.e.}, $M = \oplus_{i=1}^r I_i e_i$, where $I_i$
are the monomial ideals of $S$ generated by those monomials $m$
of $S$ such that $me_i \in M$ \cite{Ei}.

A monomial submodule $M = \oplus_{i=1}^r I_i e_i \subsetneq F$ is a \textit{squarefree monomial submodule} if every $I_i$ is a squarefree monomial ideal of $S$, \textit{i.e.}, every $I_i$ is a monomial ideal of $S$ generated by squarefree monomials.

For every monomial ideal $I \varsubsetneq S$, we denote by $G(I)$ the
unique minimal set of monomial generators of $I$, by $G(I)_{\ell}$
the set of monomials $v$ of $G(I)$ such that $\deg v = \ell$ and by
$G(I)_{> \ell}$ the set of monomials $u$ of $G(I)$ such that $\deg u
> \ell$.

For every monomial submodule $M = \oplus_{i=1}^r I_i e_i$ of $F$ such that $I_i \subsetneq S$, for $i=1,\ldots,r$, we set
\begin{eqnarray*}
  G(M) &=& \{ue_i \,:\, u \in G(I_i),i = 1, \dots, r\}, \\
  G(M)_{\ell} &=& \{ue_i\, : \, u \in G(I_i)_{\ell - f_i }, \,i = 1,
\dots, r\}, \\
  G(M)_{> \ell} &=& \{ue_i \in G(M)\,:\, u \in G(I_i)_{> \ell - f_i }, \,i = 1, \dots, r\}.
\end{eqnarray*}

Let $M$ be a  finitely generated graded $S$-module, then $M$ has a
minimal graded free $S$-resolution
\[
F. : 0 \rightarrow F_s \rightarrow \cdots \rightarrow F_1
\rightarrow F_0 \rightarrow M \rightarrow 0,
\]
where $F_i = \oplus_{j \in \ZZ}S(-j)^{\beta_{i,j}}$. The
integers $\beta_{i,j} = \beta_{i,j}(M) = \textrm{dim}_K
\textrm{Tor}_i(K, M)_j $ are called the graded Betti numbers of
$M$.
\begin{Def} \label{def:extr} A Betti number $\beta_{k,k+\ell}(M) \neq 0$ is called {\it
extremal} if $\beta_{i,\, i+j}(M) = 0$ for all $i \geq k$, $j \geq
\ell$, $(i, j) \neq (k, \ell)$.
\end{Def}
The pair $(k, \ell)$ is called a corner.

\section{Squarefree monomial submodules}\label{classes}
In this Section, we analyze some classes of squarefree monomial submodules of the finitely generated graded free $S$-module $F =\oplus_{i=1}^r Se_i$.

If $I$ is a graded ideal of the polynomial ring $S$, we denote by
$\indeg I$ the \emph{initial degree} of $I$, \emph{i.e.}, the minimum $t$ such that $I_t \neq 0$.

For a monomial $1 \neq u \in S$, we set
 \[\supp(u)=\{i: x_i\,\, \textrm{divides}\,\, u\},\]
and \[\m(u) = \max \{i:i\in \supp(u)\}.\]
Moreover, we set $\m(1) = 0$.
\begin{Def} Let $I\subsetneq S$ be a squarefree monomial ideal. $I$  is called a \textit{squarefree stable ideal} if for all $u \in G(I)$ one has
$(x_j u)/x_{\m(u)} \in I$ for all $j < \m(u), j \notin \supp(u)$.\\
$I$  is called a \textit{squarefree strongly stable ideal} if for all $u \in G(I)$ one has
$(x_j u)/x_i \in I$ for all $i \in \supp(u)$ and all $j < i$, $j \notin \supp(u)$.
\end{Def}
For every $1\leq d\leq n$, let $[x_1, \ldots, x_n]^d$ be the squarefree monomial ideal of $S$ whose minimal system of monomial generators is given by all the degree $d$ squarefree monomials in the variables $x_1, \ldots, x_n$.

For example, for $n=4$ and $d=3$,
\[[x_1, \ldots, x_4]^3 = (x_1x_2x_3, x_1x_2x_4,x_1x_3x_4,x_2x_3x_4).\]


Following \cite{KP}, we give the following definition.
\begin{Def}\label{def:squarestable} A submodule $M$ of $F$ is {\it a squarefree stable submodule} if
\begin{enumerate}
\item[(1)] $M = \oplus_{i=1}^rI_i e_i$ is a squarefree monomial submodule with
$I_i \subsetneq S$, for $i=1,\ldots,r$;
\item[(2)]  for every squarefree monomial $ue_i \in M$, $u \in S$, then
$x_j\frac{u}{x_{\m(u)}}e_i \in M$, for all $j < \m(u)$, $j\notin \supp(u)$;
\item[(3)] $[x_1,\ldots,x_n]^{f_j-f_i}I_j \subseteq I_i$ for every $j>i.$
\end{enumerate}
\end{Def}
\begin{Prop} \label{newsquarestable} Let $M\subseteq F$ be a graded submodule.

Then $M$ is a squarefree stable submodule of $F$ if and
only if
\begin{enumerate}
\item[(i)] $M = \oplus_{i = 1}^r I_i e_i$, with $I_i\varsubsetneq S$
squarefree stable ideal, for $i=1,\ldots,r$, and
\item[(ii)] $[x_1, \ldots, x_n]^{f_{i+1}-f_i}I_{i+1} \subseteq I_i$, for $i = 1, \dots, r-1$.

\end{enumerate}
\end{Prop}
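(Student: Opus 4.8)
The plan is to observe that both Definition~\ref{def:squarestable} and the present statement require $M=\oplus_{i=1}^r I_ie_i$ to be a squarefree monomial submodule with each $I_i\varsubsetneq S$, so condition (1) of the definition and the first half of (i) coincide. It then suffices to establish two separate equivalences: that condition (2) of Definition~\ref{def:squarestable} holds if and only if each $I_i$ is a squarefree stable ideal, and that condition (3) holds if and only if (ii) holds. Granting these two facts, the desired equivalence follows at once.

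For the first equivalence I would begin by unwinding condition (2). Since $ue_i\in M$ with $u$ squarefree means exactly that $u\in I_i$ is squarefree, and $x_j(u/x_{\m(u)})e_i\in M$ means $x_j u/x_{\m(u)}\in I_i$, condition (2) asserts that for every squarefree monomial $u\in I_i$ one has $x_j u/x_{\m(u)}\in I_i$ for all $j<\m(u)$ with $j\notin\supp(u)$. Restricting this to $u\in G(I_i)$ gives precisely the definition of $I_i$ being squarefree stable, so (2) $\Rightarrow$ (i). For the converse, assume each $I_i$ is squarefree stable, fix a squarefree $u\in I_i$ and an index $j<\m(u)$ with $j\notin\supp(u)$, and choose a generator $v\in G(I_i)$ dividing $u$; note $\m(v)\le\m(u)$ because $\supp(v)\subseteq\supp(u)$. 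I would then split into two cases. If $\m(v)<\m(u)$, then $x_{\m(u)}$ divides $u/v$, so $x_j u/x_{\m(u)}=v\cdot\bigl(x_j(u/v)/x_{\m(u)}\bigr)$ is a multiple of $v$ and hence lies in $I_i$. If $\m(v)=\m(u)$, then $j<\m(v)$ and $j\notin\supp(v)$, so squarefree stability of $I_i$ yields $x_j v/x_{\m(v)}\in I_i$, whence $x_j u/x_{\m(u)}=(u/v)\cdot(x_j v/x_{\m(v)})\in I_i$. This proves condition (2).

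For the second equivalence, taking $j=i+1$ in (3) gives (ii) immediately. For (ii) $\Rightarrow$ (3) I would use the elementary containment $[x_1,\ldots,x_n]^{a+b}\subseteq[x_1,\ldots,x_n]^a\,[x_1,\ldots,x_n]^b$, valid since any squarefree monomial of degree $a+b$ factors as a product of a squarefree monomial of degree $a$ and one of degree $b$, and then induct on $j-i$. In the inductive step, writing $f_j-f_i=(f_{j-1}-f_i)+(f_j-f_{j-1})$, I would chain the inclusions $[x_1,\ldots,x_n]^{f_j-f_i}I_j\subseteq[x_1,\ldots,x_n]^{f_{j-1}-f_i}[x_1,\ldots,x_n]^{f_j-f_{j-1}}I_j\subseteq[x_1,\ldots,x_n]^{f_{j-1}-f_i}I_{j-1}\subseteq I_i$, applying (ii) at index $j-1$ and then the inductive hypothesis; when $f_j-f_i>n$ the leftmost ideal is zero and the containment is trivial. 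Combining the two equivalences proves the proposition.

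The main obstacle is the converse direction of the first equivalence, namely upgrading the stability property from the minimal generators to arbitrary squarefree monomials of $I_i$. The division argument above is the crux, and the case split on whether $\m(v)$ equals $\m(u)$ is exactly what makes it go through; everything else, including the telescoping in the second equivalence, is routine.
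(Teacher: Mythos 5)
Your proof is correct, and for the half that the paper actually writes out --- the equivalence of condition (3) of Definition~\ref{def:squarestable} with (ii) --- your telescoping induction on $j-i$ via $[x_1,\ldots,x_n]^{a+b}\subseteq[x_1,\ldots,x_n]^a[x_1,\ldots,x_n]^b$ is the same argument as the paper's (the paper inducts upward from $i+t$ to $i+t+1$, you descend from $j$ to $j-1$; this is cosmetic). The genuine difference is that the paper opens its proof by declaring it ``sufficient'' to treat only that equivalence, thereby taking for granted that condition (2) of the definition (stability for \emph{every} squarefree monomial of $M$) is the same as each $I_i$ being a squarefree stable ideal (stability for the \emph{minimal generators} only). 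You prove this explicitly, and your division argument --- pick $v\in G(I_i)$ dividing $u$, note $\supp(v)\subseteq\supp(u)$ forces $\m(v)\le\m(u)$, and split on whether $\m(v)<\m(u)$ (in which case $x_{\m(u)}$ divides $u/v$ and $x_ju/x_{\m(u)}$ is already a multiple of $v$) or $\m(v)=\m(u)$ (in which case stability applied to $v$ does the work) --- is exactly the standard lemma needed to pass from generators to arbitrary squarefree monomials, and it is the only non-routine point in the whole proposition. So your write-up is more complete than the paper's; nothing in it is wrong or superfluous.
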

\begin{proof}
%
In order to prove the assert it is sufficient to show that condition (3) in Definition \ref{def:squarestable} is equivalent to the following inclusion:
\begin{equation}\label{equa1}
    [x_1, \ldots, x_n]^{f_{i+1}-f_i}I_{i+1} \subseteq I_i,
\end{equation}
for $i=1, \ldots, r-1$.

It is clear that statement (\ref{equa1}) follows from condition (3) for $j=i+1$.

Hence, in order to prove the required equivalence, it is sufficient to verify that
\[\mbox{$[x_1, \ldots, x_n]^{f_{i+t}-f_i}I_{i+t} \subseteq I_i$,\quad for $t\geq 1$}.\]

We proceed by induction on $t$.

For $t=1$ the assert follows from (\ref{equa1}).

Now take $t>1$ and suppose $[x_1, \ldots, x_n]^{f_{i+t}-f_i}I_{i+t} \subseteq I_i$.

Since
\[[x_1, \ldots, x_n]^{f_{i+t+1}-f_i}I_{i+t+1} = [x_1, \ldots, x_n]^{f_{i+t+1}-f_{i+t}+f_{i+t}-f_i}I_{i+t+1},\]
from (\ref{equa1}), we have
\[[x_1, \ldots, x_n]^{f_{i+t+1}-f_i}I_{i+t+1} \subseteq  [x_1, \ldots, x_n]^{f_{i+t}-f_i}I_{i+t},\]
and from the inductive hypothesis we get the stated result.
%
\end{proof}
Furthermore, we give the following definition.

\begin{Def}\label{def:squarestrostable} A submodule $M$ of $F$ is a {\it squarefree strongly stable} submodule if
\begin{enumerate}
\item[(1)] $M = \oplus_{i = 1}^r I_i e_i$, with $I_i\varsubsetneq S$
squarefree strongly stable ideal, for $i=1,\ldots,r$;
\item[(2)] $[x_1, \ldots, x_n]^{f_{i+1}-f_i}I_{i+1} \subseteq I_i$, for $i=1, \ldots, r-1$.
\end{enumerate}
\end{Def}

\begin{Rem} \em If $M$ is a graded submodule of the
finitely generated graded free $S$-module $S^r$, then $M$ is a squarefree (strongly) stable submodule of $S^r$ if and only if $M = \oplus_{i = 1}^r I_i e_i$, with $I_i\varsubsetneq S$
squarefree (strongly) stable ideal, for $i=1,\ldots,r$, and
$$I_r \subseteq I_{r-1} \subseteq \cdots \subseteq I_1.$$
\end{Rem}
For every $1\leq d\leq n$, let $\{x_1, \ldots, x_n\}^d$ be the set of all squarefree monomials of degree $d$ in the variables $x_1, \ldots, x_n$. We can order  $\{x_1, \ldots, x_n\}^d$ with the \textit{squarefree lexicographic order} $\geq_{\textrm{slex}}$ \cite{AHH2}.

Precisely, let
\[u=x_{i_1}x_{i_2}\cdots x_{i_d}, \qquad v=x_{j_1}x_{j_2}\cdots x_{j_d},\]
with $1\leq i_1< i_2< \cdots < i_d\leq n$, $1\leq j_1< j_2< \cdots < j_d\leq n$, be squarefree monomials of degree $d$ in $S$, then
\[\mbox{$u >_{\textrm{slex}} v$ \qquad if \qquad $i_1=j_1, \ldots, i_{s-1}=j_{s-1}$ \qquad and \qquad $i_s<j_s$}, \]
for some $1 \leq s \leq d$.

A nonempty set $L \subseteq \{x_1, \ldots, x_n\}^d$ is called a \textit{squarefree lexsegment set} of degree $d$ if for $u \in L$, $v \in \{x_1, \ldots, x_n\}^d$ such that $v >_{\textrm{slex}} u$, then $v \in L$.

\begin{Def} Let $I\subsetneq S$ be a graded ideal. $I$ is a \textit{squarefree lexsegment ideal of degree} $d$ if $I$ is generated by the squarefree monomials belonging to a
squarefree lexsegment set of degree $d$.\\
$I$ is a \textit{squarefree lexsegment ideal} if for all $1\leq d \leq n$, if $u, v \in \{x_1, \ldots, x_n\}^d$ with $u \in I$ and $v >_{\textrm{slex}} u$, then $v \in I$.
\end{Def}
For every $d\geq 1$, let $F_d$ be the part of degree $d$ of $F\oplus_{i=1}^r Se_i$. Denote by $M^s(F_d)$ the set of all squarefree monomials
of degree $d$ of $F$. We order such set using the ordering $>_{\textrm{slex}}$, above defined.  We will denote the new ordering by $>_{\textrm{slex}_F}$.

It is defined as follows: if $u e_i$ and $v e_j$ are squarefree monomials of $F$ such that $\deg(u e_i)=\deg(v e_j)$, then
\[
u e_i >_{\textrm{slex}_F} v e_j \qquad \textrm{if} \qquad
\left\{
\begin{array}{ll} i < j \,\,\, \mbox{or} \\
i = j \,\, \mbox{and}\,\, u >_{\textrm{slex}} v.
\end{array}
\right.
\]

\begin{Def} Let $\mathcal{L} = \oplus_{i = 1}^r I_i e_i$ be a  squarefree monomial submodule
of $F$ such that $I_i \varsubsetneq S$, for $i=1,\ldots,r$. We call $\mathcal{L}$ a \textit{squarefree lexicographic
submodule} if for each degree $d\geq 1$, if $u, v \in M^s(\mathcal{L}_d)$ with $u \in
\mathcal{L}$ and $v>_{\textrm{slex}_F}u$, then $v \in \mathcal{L}$.
\end{Def}
\begin{Prop} \label{lex} Let $\mathcal{L}\subsetneq S$ be a graded submodule.

Then $\mathcal{L}$ is a  squarefree lexicographic submodule of $F$ if and
only if
\begin{enumerate}
\item[(i)] $\mathcal{L} = \oplus_{i = 1}^r I_i e_i$, with $I_i\varsubsetneq S$
 squarefree lexsegment ideal, for $i=1,\ldots,r$, and
\item[(ii)] $[x_1,\dots, x_n]^{\rho_i + f_i - f_{i-1}}
\subseteq I_{i-1}$, for $i = 2, \dots, r$, with $\rho_i =
\indeg I_i$.

\end{enumerate}
\end{Prop}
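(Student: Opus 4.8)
The plan is to prove both implications by reading off the single-component ($j=i$) and cross-component ($j\neq i$) behaviour of the order $>_{\textrm{slex}_F}$ and turning each into a statement about the ideals $I_i$. Recall that $ue_i>_{\textrm{slex}_F}ve_j$ holds precisely when $i<j$, or $i=j$ and $u>_{\textrm{slex}}v$; thus within a fixed component the order is just $>_{\textrm{slex}}$ on $S$, while across components a smaller basis index always dominates. Since $\mathcal{L}$ is a squarefree monomial submodule it automatically has the form $\mathcal{L}=\oplus_{i=1}^r I_ie_i$ with each $I_i\subsetneq S$ squarefree, so the direct-sum part of (i) is free and the content is the lexsegment property of the $I_i$ together with (ii). Throughout I assume each $I_i\neq 0$, so that $\rho_i=\indeg I_i$ is defined and satisfies $\rho_i\leq n$.

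For the forward direction, assume $\mathcal{L}$ is a squarefree lexicographic submodule. To obtain (i), fix $i$ and take $u,v\in\{x_1,\dots,x_n\}^d$ with $u\in I_i$ and $v>_{\textrm{slex}}u$. Then $ue_i,ve_i$ are squarefree monomials of $F$ of the same degree $d+f_i$, with $ue_i\in\mathcal{L}$ and $ve_i>_{\textrm{slex}_F}ue_i$; the lexicographic property forces $ve_i\in\mathcal{L}$, i.e. $v\in I_i$, so $I_i$ is a squarefree lexsegment ideal. For (ii), fix $i\geq 2$ and note that, since $I_i$ is lexsegment of initial degree $\rho_i$, its $>_{\textrm{slex}}$-largest degree-$\rho_i$ monomial $w=x_1\cdots x_{\rho_i}$ lies in $I_i$, so $we_i\in\mathcal{L}$. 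For any squarefree monomial $m$ of degree $\rho_i+f_i-f_{i-1}$ (when $\rho_i+f_i-f_{i-1}>n$ the asserted inclusion is trivial, as the ideal is $0$), the monomial $me_{i-1}$ has the same degree $\rho_i+f_i$ as $we_i$ and satisfies $me_{i-1}>_{\textrm{slex}_F}we_i$ because $i-1<i$; hence $me_{i-1}\in\mathcal{L}$ and $m\in I_{i-1}$, giving $[x_1,\dots,x_n]^{\rho_i+f_i-f_{i-1}}\subseteq I_{i-1}$.

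For the converse, assume (i) and (ii), and take squarefree monomials $ue_i,ve_j$ of $F$ of the same degree $d$ with $ue_i\in\mathcal{L}$ (so $u\in I_i$) and $ve_j>_{\textrm{slex}_F}ue_i$; by the description of the order either $j=i$ or $j<i$. If $j=i$ then $\deg u=\deg v$ and $v>_{\textrm{slex}}u$, so $v\in I_i$ by the lexsegment property. If $j<i$, the key is the chained inclusion
\[
[x_1,\dots,x_n]^{\rho_i+f_i-f_j}\subseteq I_j\qquad(j<i),
\]
which I would prove by induction on $i-j$: the base case $j=i-1$ is exactly (ii), and the step combines (ii) for the index $j$ with the inequality $\rho_j\leq\rho_i+f_i-f_j$ and the monotonicity $[x_1,\dots,x_n]^a\subseteq[x_1,\dots,x_n]^b$ for $a\geq b$, in the same spirit as the induction of Proposition \ref{newsquarestable}. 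Granting this, $u\in I_i$ forces $\deg u=d-f_i\geq\rho_i$, so $v$ has degree $d-f_j\geq\rho_i+f_i-f_j$; being squarefree it is divisible by a squarefree monomial $v'$ of degree $\rho_i+f_i-f_j$, and $v'\in[x_1,\dots,x_n]^{\rho_i+f_i-f_j}\subseteq I_j$ yields $v\in I_j$, i.e. $ve_j\in\mathcal{L}$.

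I expect the cross-component case $j<i$ of the converse, and within it the chained inclusion, to be the main obstacle. The delicate point is the inequality $\rho_j\leq\rho_i+f_i-f_j$ needed at each inductive step: when $\rho_i+f_i-f_j\leq n$ it follows because $I_j$ then contains squarefree monomials of that degree, while when $\rho_i+f_i-f_j>n$ it holds trivially since $\rho_j\leq n$. Either way the step goes through, and the degenerate exponents exceeding $n$ cause no difficulty because the corresponding ideals $[x_1,\dots,x_n]^d$ vanish and the inclusions become automatic.
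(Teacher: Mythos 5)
Your proof is correct and follows essentially the same route as the paper: the forward direction is verbatim the paper's argument (restrict to one component for the lexsegment property, compare $me_{i-1}$ with $x_1\cdots x_{\rho_i}e_i$ for (ii)), and the converse likewise reduces to the adjacent-index case and telescopes it across components. The only cosmetic difference is in the converse's cross-component step: the paper interpolates a chain of explicit witness monomials $w_t=x_1x_2\cdots x_{d-f_t}$ in the intermediate components $e_{i+1},\dots,e_{j-1}$ and applies the adjacent case repeatedly, whereas you telescope the ideal inclusions $[x_1,\dots,x_n]^{\rho_i+f_i-f_j}\subseteq I_j$ by induction on $i-j$ (in the spirit of Proposition \ref{newsquarestable}); both hinge on the same inequality $\rho_j\leq \rho_i+f_i-f_j$ and yield the same conclusion.
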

\begin{proof} Let $\mathcal{L}$ be a squarefree lexicographic submodule of $F$.
\par\noindent (i) Since $\mathcal{L}$ is a squarefree monomial submodule of
$F$, one has $\mathcal{L} = \oplus_{i = 1}^r I_i e_i$, with $I_i$
squarefree monomial ideal of $S$, for every $i$. Let $u, v \in \{x_1, \ldots, x_n\}^d$ with $u \in I_i$ and $v>_{\textrm{slex}}u$. It follows
that $ve_i >_{\textrm{slex}_F}ue_i$. Since  $ue_i\in I_ie_i$ and
$\mathcal{L}$ is a squarefree lexicographic submodule of $F$, $ve_i \in
I_ie_i$, and so $v \in I_i$, \textit{i.e.}, $I_i$ is a squarefree lexsegment ideal of
$S$ for every $i$.
\par\medskip\noindent
(ii) Since $I_i$ is a squarefree lexsegment ideal of $S$, then
$x_1x_2\cdots x_{\rho_i}\in I_i$, $\rho_i = \indeg I_i$, and consequently
$x_1x_2\cdots x_{\rho_i}e_i\in I_ie_i$. On the other hand, $\mathcal{L}$ is a squarefree
lexicographic submodule of $F$, then for all $u\in \{x_1, \ldots, x_n\}^{\rho_i + f_i - f_{i-1}}$, we have that $ue_{i-1} >_{\textrm{slex}_F}
x_1x_2\cdots x_{\rho_i}e_i$. Hence, $ue_{i-1} \in
I_{i-1}e_{i-1}$, \textit{i.e.}, $u \in I_{i-1}$.

Conversely, let $\mathcal{L}$ be a graded submodule of $F$
satisfying (i) and (ii).
\par\noindent
Since every ideal $I_i$ is a squarefree lexsegment ideal, we have only to
prove that for any pair $(i, j)$ of integers with $1 \leq i < j
\leq r$, if $ue_i, ve_j\in M^s(\mathcal{L}_d)$, then $ve_j \in \mathcal{L}$ implies $ue_i \in \mathcal{L}$, where $M^s(\mathcal{L}_d)$ is the set of
all squarefree monomials of degree $d$ of $\mathcal{L}$.
\medskip\par\noindent
(Case 1). $i = j-1$. Let $ue_{j-1}, ve_j\in M^s(\mathcal{L}_d)$ with $ve_j \in \mathcal{L}$.

Since $d = \deg ue_{j-1} = \deg ve_j$, it follows that
$\deg u = \deg v + f_j - f_{j-1} \geq \rho_j + f_j - f_{j-1}$ and
so $u \in [x_1,\dots, x_n]^{\rho_j + f_j - f_{j-1}} \subseteq
I_{j-1}$.
\medskip\par\noindent
(Case 2). $i \leq j-2$. Let $ue_i >_{\textrm{slex}_F} ve_j$ with $ue_i, ve_j \in M^s(\mathcal{L}_d)$ and
$ve_j \in \mathcal{L}$.

For $t = i+1, \ldots, j-1$, set $w_t = x_1x_2\cdots x_{d-f_t}$.
\par\noindent
It is
\[ue_i >_{\textrm{slex}_F} w_{i+1}e_{i+1} >_{\textrm{slex}_F}  w_{i+2}e_{i+2} >_{\textrm{slex}_F} \cdots  >_{\textrm{slex}_F}
 w_{j-1}e_{j-1}>_{\textrm{slex}_F} ve_j.\]
Since $d = \deg ue_i = \deg  w_te_t =\deg ve_j$, for $t =
i+1, \ldots, j-1$, then, from (Case 1), $ w_{j-1} \in I_{j-1}$, $
w_{j-2}\in I_{j-2}$, $\ldots$, $ w_{i+1} \in I_{i+1}$ and finally
$u \in I_i$.
\end{proof}

\begin{Expl} (1) Let $S = K[x_1, x_2,x_3,x_4]$ and $F \simeq S^2$, $e_1 = (1, 0)$
and $e_2 = (0, 1)$. The submodule of $F$
\[\mathcal{L} = (x_1x_2, x_1x_3)e_1 \oplus (x_1x_2x_3, x_1x_2x_4)e_2\] is not a squarefree
lexicographic submodule of $F$ even if the ideals $(x_1x_2, x_1x_3)$, $(x_1x_2x_3, x_1x_2x_4)$ are squarefree lexsegment ideals of $S$. In fact, $x_1x_2x_3e_2 \in
\mathcal{L}_3$ but $x_2x_3x_4e_1 >_{\textrm{slex}_F} x_1x_2x_3e_2$ and $x_2x_3x_4e_1 \notin
\mathcal{L}_3$. Observe that $[x_1,x_2,x_3,x_4]^3 \nsubseteq (x_1x_2, x_1x_3)$.
\medskip
\par\noindent (2) Let $S = K[x_1, x_2,x_3,x_4,x_5]$ and $F \simeq S^3$, $e_1 = (1, 0,0)$,
$e_2 = (0, 1,0)$ and $e_3 = (0,0,1)$. The submodule of $F$
\[
\begin{aligned}
\mathcal{L} = [x_1 ,x_2, x_3,x_4,x_5]^2 e_1 &\oplus (x_1x_2x_3, x_1x_2x_4, x_1x_2x_5, x_1x_3x_4, x_2x_3x_4x_5)e_2 \ \oplus \\
&\oplus (x_1x_2x_3x_4, x_1x_2x_3x_5, x_1x_2x_4x_5)e_3
\end{aligned}
\]
is a squarefree lexicographic submodule of $F$.
\end{Expl}

\begin{Cor} \label{degree} Let $\mathcal{L} = \oplus_{i = 1}^r I_i e_i \subsetneq F$ be a squarefree lexicographic submodule.

Set
\[\mu_i = \max\{\deg ue_i\,:\, ue_i \in G(I_ie_i)\},\]
for $ i=1, \dots,r.$ Then
\begin{enumerate}
\item[1)] $\mu_i - f_i \leq \indeg I_{i+1} + f_{i+1} -f_i $,  for $i=1, \dots, r-1$.
\item[2)] $\mu_1 \leq \mu_2 \leq \cdots \leq \mu_r $.
\end{enumerate}
\end{Cor}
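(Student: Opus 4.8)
The plan is to reduce both assertions to Proposition~\ref{lex}(ii) together with one elementary combinatorial observation about the ideals $[x_1,\dots,x_n]^e$. Throughout, I would write $\rho_i=\indeg I_i$ and record that, since $\deg(ue_i)=\deg u+f_i$, one has $\mu_i=f_i+d_i$, where $d_i=\max\{\deg u:u\in G(I_i)\}$ is the top degree of a minimal generator of $I_i$; likewise $\rho_i=\min\{\deg u:u\in G(I_i)\}$, so that $\rho_i\le d_i$ for every $i$. With this notation, statement 1) is exactly the inequality $d_i\le\rho_{i+1}+f_{i+1}-f_i$, and statement 2) is $d_i+f_i\le d_{i+1}+f_{i+1}$.

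For statement 1), I would first invoke Proposition~\ref{lex}(ii), with the index $i$ there replaced by $i+1$, to obtain
\[
[x_1,\dots,x_n]^{\rho_{i+1}+f_{i+1}-f_i}\subseteq I_i,\qquad i=1,\dots,r-1,
\]
valid whenever the exponent $e:=\rho_{i+1}+f_{i+1}-f_i$ satisfies $e\le n$. The key point is then that once every squarefree monomial of degree $e$ lies in $I_i$, no squarefree monomial of degree $>e$ can be a minimal generator of $I_i$: if $w=x_{j_1}\cdots x_{j_s}$ is squarefree with $s=\deg w>e$ and $j_1<\cdots<j_s$, then $w'=x_{j_1}\cdots x_{j_e}$ is a squarefree monomial of degree $e$ with $w'\in[x_1,\dots,x_n]^e\subseteq I_i$ and $w'$ a proper divisor of $w$, so $w\notin G(I_i)$. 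Hence $d_i\le e$. In the degenerate case $e>n$ there is nothing to check, since $I_i$ is squarefree and therefore $d_i\le n<e$. Either way $d_i\le\rho_{i+1}+f_{i+1}-f_i$, which is precisely $\mu_i-f_i\le\indeg I_{i+1}+f_{i+1}-f_i$.

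Statement 2) then follows by chaining the previous bound with $\rho_{i+1}\le d_{i+1}$. Indeed, from 1) one has $\mu_i=d_i+f_i\le\rho_{i+1}+f_{i+1}$, and therefore
\[
\mu_i\le\rho_{i+1}+f_{i+1}\le d_{i+1}+f_{i+1}=\mu_{i+1},
\]
for $i=1,\dots,r-1$, giving $\mu_1\le\mu_2\le\cdots\le\mu_r$.

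The only genuinely delicate point is the passage in 1) from the containment of $[x_1,\dots,x_n]^e$ inside $I_i$ to a bound on the top degree of the minimal generators of $I_i$; everything else is bookkeeping. I would be careful to note that $\indeg I_{i+1}\ge 1$ and $f_{i+1}\ge f_i$ (so that $e\ge 1$ and $[x_1,\dots,x_n]^e$ is genuinely defined in the nondegenerate case), and to handle the case $e>n$ separately as above, since then there simply are no squarefree monomials of degree $e$.
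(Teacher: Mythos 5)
Your proof is correct and takes essentially the same route as the paper: the paper's proof is a one-line appeal to the containment $[x_1,\dots,x_n]^{\rho_{i+1}+f_{i+1}-f_i}\subseteq I_i$ from Proposition~\ref{lex}(ii), and your argument simply supplies the details (that this containment forces every minimal generator of $I_i$ to have degree at most $\rho_{i+1}+f_{i+1}-f_i$, and that chaining with $\indeg I_{i+1}\le \max\{\deg u : u\in G(I_{i+1})\}$ yields 2) from 1)).
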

\begin{proof} It is a consequence of the fact that if
$\mathcal{L}$ is a squarefree lexicographic submodule then $[x_1,\dots,
x_n]^{\rho_i + f_i - f_{i-1}} \subseteq I_{i-1}$, for $i =
2, \dots, r$, with $\rho_i = \indeg (I_i)$.
\end{proof}
The next result shows the relation between the class of squarefree lexicographic submodules and the class of squarefree (strongly) stable submodule.
\begin{Prop} Let $\mathcal{L} = \oplus_{i = 1}^r I_i e_i \subsetneq F$ be a  squarefree lexicographic submodule, then $\mathcal{L}$ is a  squarefree strongly stable submodule.
\end{Prop}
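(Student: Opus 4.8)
The statement to prove is that a squarefree lexicographic submodule $\mathcal{L} = \oplus_{i=1}^r I_i e_i$ is a squarefree strongly stable submodule. By Definition~\ref{def:squarestrostable}, I must verify two conditions: (1) each $I_i$ is a squarefree strongly stable ideal of $S$, and (2) $[x_1,\dots,x_n]^{f_{i+1}-f_i} I_{i+1} \subseteq I_i$ for $i=1,\dots,r-1$. The second condition is exactly condition (ii) of Proposition~\ref{lex} (taking the index shift into account), which holds because $\mathcal{L}$ is squarefree lexicographic; so the work is concentrated on condition (1).

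**Condition (1): each $I_i$ is squarefree strongly stable.** By Proposition~\ref{lex}(i), each $I_i$ is a squarefree lexsegment ideal. So it suffices to prove the purely ideal-theoretic fact that \emph{a squarefree lexsegment ideal is squarefree strongly stable}. Let $u = x_{i_1}\cdots x_{i_d} \in G(I_i)$, pick an index $i_s \in \supp(u)$ and an index $j < i_s$ with $j \notin \supp(u)$, and set $w = x_j u / x_{i_s}$, which is again a squarefree monomial of degree $d$. I claim $w >_{\textrm{slex}} u$. Writing the supports in increasing order, replacing the $s$-th entry $i_s$ by the smaller index $j < i_s$ and re-sorting can only move entries to smaller positions in the first place where the two monomials differ; concretely, at the first index where $w$ and $u$ disagree, $w$ carries a strictly smaller variable subscript, which is precisely the condition $w >_{\textrm{slex}} u$ in the definition of the squarefree lexicographic order. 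Since $u \in I_i$ and $I_i$ is a squarefree lexsegment ideal, the lexsegment property gives $w \in I_i$. This is exactly the defining condition for $I_i$ to be squarefree strongly stable.

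**The main obstacle.** The delicate point is the combinatorial claim $w >_{\textrm{slex}} u$, because inserting $j$ into the support and removing $i_s$ forces a re-sorting of the index sequence, and one must check carefully that the first place of disagreement between the sorted support of $w$ and that of $u$ indeed witnesses a strict decrease. The cleanest way to handle this is to locate the position at which $j$ lands after re-sorting: all indices of $u$ smaller than $j$ are shared by $w$, and the first entry where they can differ is the slot where $j$ is inserted; since $j$ is smaller than the $u$-entry occupying that slot, the strict inequality $i_s < j_s$ (in the notation of the order) holds there. I would verify this by a short case split according to whether $j$ is smaller or larger than the indices between its position and $i_s$, but in all cases the newly inserted smaller index $j$ produces the first strict decrease, giving $w >_{\textrm{slex}} u$. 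Once this claim is established, both conditions of Definition~\ref{def:squarestrostable} follow immediately and the proof is complete.
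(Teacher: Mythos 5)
Your handling of condition (1) of Definition \ref{def:squarestrostable} is correct and in fact more self-contained than the paper's: where the paper simply cites \cite{AHH2} for the fact that every squarefree lexsegment ideal is squarefree strongly stable, you prove it directly. Your exchange argument is sound: writing $u=x_{i_1}\cdots x_{i_d}$ with $i_1<\cdots<i_d$ and $w=x_ju/x_{i_s}$ with $j<i_s$, $j\notin\supp(u)$, if $i_p<j<i_{p+1}$ (with $i_0=0$) then the sorted supports of $w$ and $u$ agree in positions $1,\dots,p$ and the $(p+1)$-st entry of $w$ is $j<i_{p+1}$, so indeed $w>_{\textrm{slex}}u$ and the lexsegment property gives $w\in I_i$.

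The genuine gap is in condition (2). You assert that $[x_1,\dots,x_n]^{f_{i+1}-f_i}I_{i+1}\subseteq I_i$ ``is exactly condition (ii) of Proposition \ref{lex}.'' It is not: condition (ii) of that proposition, reindexed, reads $[x_1,\dots,x_n]^{\rho_{i+1}+f_{i+1}-f_i}\subseteq I_i$ with $\rho_{i+1}=\indeg I_{i+1}$ --- a containment of a pure squarefree power that does not involve $I_{i+1}$ at all --- whereas condition (2) of Definition \ref{def:squarestrostable} concerns the product ideal $[x_1,\dots,x_n]^{f_{i+1}-f_i}I_{i+1}$. A bridging step is required, and it is precisely what the paper's proof supplies: one first observes that $[x_1,\dots,x_n]^{f_{i+1}-f_i}I_{i+1}\subseteq[x_1,\dots,x_n]^{f_{i+1}-f_i+\indeg I_{i+1}}$ (a product $uv$ with $u$ squarefree of degree $f_{i+1}-f_i$ and $v\in I_{i+1}$ has support of size at least $f_{i+1}-f_i+\indeg I_{i+1}$, at least when the supports are disjoint, which is the relevant squarefree situation), and then applies Proposition \ref{lex}(ii) to conclude the containment in $I_i$. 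Without this intermediate inclusion your argument does not establish condition (2); once it is inserted, your proof is complete.
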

\begin{proof} Since every  squarefree lexsegment ideal is a squarefree strongly stable ideal \cite{AHH2}, from Proposition \ref{newsquarestable}, it is sufficient to show that
\begin{equation}\label{lex1}
    [x_1, \ldots, x_n]^{f_{i+1}-f_i}I_{i+1} \subseteq I_i
\end{equation}
for $i=1, \ldots, r-1$.

Since
\[[x_1, \ldots, x_n]^{f_{i+1}-f_i}I_{i+1} \subseteq [x_1, \ldots, x_n]^{f_{i+1}-f_i+\indeg I_{i+1}},\]
the assert follows from Proposition \ref{lex}.
\end{proof}
Hence, we have the following hierarchy of submodules:\\
squarefree lexicographic submodule $\Rightarrow$ squarefree strongly stable submodule $\Rightarrow$ squarefree stable submodule.

\section{Extremal Betti numbers of squarefree stable submodules}\label{extr}
In this Section we examine the extremal Betti numbers of squarefree stable submodules.

If  $I$ is a squarefree stable ideal, then \cite{AHH2}:
\begin{equation}\label{AHHeq}
    \beta_{i, \, i+j}(I) =\sum_{u \in G(I)_j} \binom{\m(u)-j}{i}.
\end{equation}

Hence, since every squarefree stable submodule $M \subsetneq F$ is a squarefree
monomial submodule, we have that
\begin{equation}\label{betti2}
\beta_{k,\,k+j}(M) = \sum_{i=1}^r \beta_{k,\,{k+j - f_i} }(I_i)=\sum_{i=1}^{r}\left[\sum_{u \in G(I_i)_{j-f_i}}\binom{\m(u)-j+f_i}{k}\right].
\end{equation}


The next result shows that all linear strands of a squarefree stable submodule begin in homological degree $0$.

For a positive integer $t$, we set $[t] = \{1, \ldots, t\}$.
\begin{Lem} \label{0} Let $M = \oplus_{t=1}^rI_te_t \subsetneq F$ be a squarefree stable
submodule. If $\beta_{i, i+j}(M) \neq 0$, then $\beta_{k, k+j}(M)
\neq 0$ for $k=0,\ldots, i$.
\end{Lem}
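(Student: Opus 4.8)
The plan is to read everything off the closed formula (\ref{betti2}) for the graded Betti numbers of a squarefree stable submodule. Writing $t$ for the summation index to avoid clashing with the homological degree, this formula expresses
\[
\beta_{k,\,k+j}(M) = \sum_{t=1}^{r}\ \sum_{u \in G(I_t)_{j-f_t}}\binom{\m(u)-j+f_t}{k}
\]
as a sum of binomial coefficients whose top entry $\m(u)-j+f_t$ does not depend on $k$. So the whole argument reduces to a monotonicity property of a single binomial coefficient in its lower index, together with the observation that no cancellation can spoil positivity.

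First I would check that every summand is nonnegative, so that $\beta_{k,k+j}(M)\neq 0$ is equivalent to the strict positivity of at least one summand. Indeed, if $u\in G(I_t)_{j-f_t}$ then $\deg u = j-f_t$, and since $u$ is a squarefree monomial of degree $j-f_t$ its largest variable index satisfies $\m(u)\geq \deg u = j-f_t$; hence the top entry $\m(u)-j+f_t = \m(u)-\deg u \geq 0$. Consequently each $\binom{\m(u)-j+f_t}{k}$ is a genuine (nonnegative) binomial coefficient, and $\beta_{k,k+j}(M)$ is a sum of nonnegative integers.

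Next, I would use the hypothesis $\beta_{i,i+j}(M)\neq 0$ to extract a single positive summand: there exist an index $t$ and a generator $u\in G(I_t)_{j-f_t}$ with $\binom{\m(u)-j+f_t}{i}>0$, which is equivalent to $\m(u)-j+f_t \geq i$. For every $k$ with $0\leq k\leq i$ we then have $0\leq k\leq i\leq \m(u)-j+f_t$, and therefore $\binom{\m(u)-j+f_t}{k}>0$ as well. This same pair $(t,u)$ thus contributes a strictly positive term to $\beta_{k,k+j}(M)$, forcing $\beta_{k,k+j}(M)\neq 0$ for all $k=0,\ldots,i$, as claimed.

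There is no serious obstacle in this argument; the only point that must be handled with care is the nonnegativity of the summands, which guarantees that the single surviving term cannot be cancelled by the others. This nonnegativity rests precisely on the squarefreeness of the generators through the inequality $\m(u)\geq \deg u$, and it is exactly what turns the linear-strand statement into a purely combinatorial consequence of (\ref{betti2}).
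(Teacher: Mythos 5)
Your proof is correct and follows essentially the same route as the paper: both extract from the hypothesis $\beta_{i,i+j}(M)\neq 0$ and formula (\ref{betti2}) a pair $(t,u)$ with $\m(u)+f_t-j\geq i$, and then observe that the same generator contributes a nonzero term for every $k\leq i$. Your additional remarks on the nonnegativity of all summands and on $\m(u)\geq\deg u$ just make explicit what the paper leaves implicit.
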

\begin{proof} If $\beta_{i, i+j}(M)\neq 0$, by (\ref{betti2})
there exists $t \in [r] $ and $u \in G(I_t)_{j-f_t}$
such that $\m(u) \geq i+j-f_t$.

It follows that $\m(u) \geq k+j-f_t$,
for $k=0, \dots, i$. Hence, $\beta_{k, k+j-f_t}(I_t) \neq 0$ and
$\beta_{k, k+j}(M) \neq 0$, for $k=0,\dots, i$.
\end{proof}
From Definition \ref{def:extr}, it follows:
\begin{Cor}\label{cor:varie} Under the same hypotheses of Lemma \ref{0} for $M$. The
following conditions are equivalent:
\begin{enumerate}
\item[\rm{(a)}] $\beta_{k, k+\ell}(M)$ is extremal;
\item[\rm{(b)}] \begin{enumerate}
\item[\rm{(1)}] $\beta_{k, k+\ell}(M) \neq 0$;
\item[\rm{(2)}] $\beta_{k, k+j}(M) = 0$, for $j > \ell$;
\item[\rm{(3)}] $\beta_{i, i+\ell}(M) = 0$, for $i >k$.
\end{enumerate}
\end{enumerate}
\end{Cor}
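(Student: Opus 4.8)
The plan is to establish the two implications separately, with the direction $\mathrm{(b)} \Rightarrow \mathrm{(a)}$ carrying all the content and the converse being a direct unwinding of Definition \ref{def:extr}.

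First I would treat $\mathrm{(a)} \Rightarrow \mathrm{(b)}$. Assuming $\beta_{k,k+\ell}(M)$ is extremal, condition (1) is immediate. For (2), fix $j > \ell$: the pair $(k,j)$ satisfies $k \geq k$, $j \geq \ell$ and $(k,j) \neq (k,\ell)$, so the definition forces $\beta_{k,k+j}(M) = 0$. For (3), fix $i > k$: the pair $(i,\ell)$ satisfies $i \geq k$, $\ell \geq \ell$ and $(i,\ell) \neq (k,\ell)$, so again $\beta_{i,i+\ell}(M) = 0$. Thus (b) holds; this half requires nothing beyond specializing the definition to the row $i = k$ and the column $j = \ell$.

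The nontrivial direction is $\mathrm{(b)} \Rightarrow \mathrm{(a)}$, and here the engine is Lemma \ref{0}. I would first restate that lemma in its contrapositive form: for a \emph{fixed} $j$, if $\beta_{k,k+j}(M) = 0$ then $\beta_{i,i+j}(M) = 0$ for every $i \geq k$ (indeed, were some $\beta_{i,i+j}(M) \neq 0$ with $i \geq k$, Lemma \ref{0} would force $\beta_{k,k+j}(M) \neq 0$). Equivalently, for each fixed $j$ the nonvanishing Betti numbers $\beta_{i,i+j}(M)$ occupy an initial segment $\{0,1,\dots,p_j\}$ of homological degrees (possibly empty). Now, assuming (b), let $(i,j)$ be any pair with $i \geq k$, $j \geq \ell$ and $(i,j) \neq (k,\ell)$; I must show $\beta_{i,i+j}(M) = 0$. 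If $j = \ell$, then $i \neq k$ together with $i \geq k$ gives $i > k$, and (3) yields $\beta_{i,i+\ell}(M) = 0$. If instead $j > \ell$, then (2) gives $\beta_{k,k+j}(M) = 0$, and the contrapositive form of Lemma \ref{0} propagates this vanishing to all $i \geq k$, so $\beta_{i,i+j}(M) = 0$. Together with (1), this is precisely the defining condition for $\beta_{k,k+\ell}(M)$ to be extremal.

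The only real obstacle is the case $j > \ell$: one cannot conclude $\beta_{i,i+j}(M) = 0$ from (2) alone, since (2) controls only the single entry $i = k$. It is exactly the ``linear strands begin in homological degree $0$'' property of squarefree stable submodules, recorded in Lemma \ref{0}, that lets the vanishing at $i = k$ spread across the entire strand; without it the conditions (2) and (3) would be strictly weaker than extremality.
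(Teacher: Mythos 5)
Your proof is correct and follows exactly the route the paper intends: the paper states this corollary without an explicit proof, presenting it as an immediate consequence of Definition \ref{def:extr} together with Lemma \ref{0}, and your write-up simply makes explicit that Lemma \ref{0} (in contrapositive form) is what propagates the vanishing in condition (2) from $i=k$ to all $i\geq k$ along each strand with $j>\ell$. Nothing is missing and nothing is done differently in substance.
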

\begin{Thm}
\label{equiv} Let $M = \oplus_{i=1}^rI_ie_i \subsetneq F$ be a  squarefree stable submodule.
\par The following conditions are equivalent:
\begin{enumerate}
\item[\rm{(1)}] $\beta_{k, \, k+ \ell}(M)$ is extremal;
\item[\rm{(2)}] $k + \ell = \max\{\m(u)+f_i \, :\, ue_i \in G(M)_{\ell},\,\,
i=1, \ldots, r\}$ and $\m(u)+f_i < k+j$, for all $j > \ell$ and
for all $ue_i \in G(M)_j$.
\end{enumerate}
\end{Thm}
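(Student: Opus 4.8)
The plan is to reduce the claimed equivalence to the three numerical conditions of Corollary \ref{cor:varie} and then to read off each of them directly from the Betti-number formula (\ref{betti2}). The decisive point is that in (\ref{betti2}) every summand $\binom{\m(u)-j+f_i}{k}$ is a \emph{non-negative} integer, so no cancellation can occur: the whole sum vanishes precisely when each of its terms does. (Here the binomial argument is genuinely non-negative, since for $ue_i\in G(M)_j$ the squarefree monomial $u$ has $\m(u)\geq\deg u=j-f_i$.) Consequently, for every homological degree $k$ and every internal degree $j$ one obtains the nonvanishing criterion
\[
\beta_{k,\,k+j}(M)\neq 0 \iff \exists\, ue_i\in G(M)_j \text{ with } \m(u)+f_i\geq k+j,
\]
because $\binom{\m(u)-j+f_i}{k}\neq 0$ exactly when $\m(u)+f_i\geq k+j$. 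Writing $\nu_j=\max\{\m(u)+f_i : ue_i\in G(M)_j\}$ whenever $G(M)_j\neq\emptyset$, this reads $\beta_{k,k+j}(M)\neq 0$ iff $\nu_j\geq k+j$; and if $G(M)_j=\emptyset$, then $\beta_{k,k+j}(M)=0$ for every $k$ by the empty sum.

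With this criterion in hand, I would invoke Corollary \ref{cor:varie}, which (via Lemma \ref{0}) asserts that $\beta_{k,k+\ell}(M)$ is extremal if and only if (1) $\beta_{k,k+\ell}(M)\neq 0$, (2) $\beta_{k,k+j}(M)=0$ for all $j>\ell$, and (3) $\beta_{i,i+\ell}(M)=0$ for all $i>k$. Condition (1) translates to $\nu_\ell\geq k+\ell$. Condition (3) is equivalent to $\beta_{k+1,\,(k+1)+\ell}(M)=0$ (the cases $i>k+1$ being then automatic), i.e.\ to $\nu_\ell<(k+1)+\ell$, that is $\nu_\ell\leq k+\ell$. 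Combining (1) and (3) yields exactly
\[
k+\ell=\nu_\ell=\max\{\m(u)+f_i : ue_i\in G(M)_\ell,\ i=1,\dots,r\},
\]
which is the first half of statement (2) of the theorem. Finally, condition (2) of the corollary says $\nu_j<k+j$ for every $j>\ell$ with $G(M)_j\neq\emptyset$ (and holds vacuously otherwise), i.e.\ $\m(u)+f_i<k+j$ for all $j>\ell$ and all $ue_i\in G(M)_j$; this is the second half of statement (2). Assembling the three translations gives the asserted equivalence.

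The main thing to get right is the no-cancellation observation, since it is what lets me convert each statement about a single strand $\{\beta_{k,k+j}(M)\}_k$ into a purely combinatorial statement about the maximal value $\nu_j$ attained by $\m(u)+f_i$ over the generators of internal degree $j$; without non-negativity of the summands, the vanishing of a Betti number could not be detected generator-by-generator. A secondary point requiring care is the empty-degree bookkeeping: for those $j$ in which $M$ has no generator of internal degree $j$, the relevant Betti numbers vanish identically and the quantified inequality in statement (2) is vacuously true, so such degrees are harmless. Once these are settled, each implication is immediate, and the proof becomes a dictionary between (\ref{betti2}), Corollary \ref{cor:varie}, and statement (2).
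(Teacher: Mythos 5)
Your proposal is correct and follows essentially the same route as the paper: both reduce the statement to the three conditions of Corollary \ref{cor:varie} and translate each one via the nonvanishing criterion extracted from formula (\ref{betti2}), namely that $\beta_{k,k+j}(M)\neq 0$ iff some $ue_i\in G(M)_j$ has $\m(u)+f_i\geq k+j$. Your explicit justification of the no-cancellation point (non-negativity of the binomial summands) is a detail the paper leaves implicit, but the argument is the same.
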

\begin{proof} (1)$\Rightarrow$ (2). By (\ref{betti2}) $\beta_{k, \, k+ \ell}(M)\neq 0$ if and only if there exists a squarefree
monomial $ue_i \in G(M)_{\ell}$ such
that $\m(u)+f_i \geq k+\ell$, for some $i\in [r]$. As a consequence, $\max\{\m(u)+f_i \, :\, ue_i \in G(M)_{\ell}\,,
i=1, \ldots, r\}\geq k +\ell$.

Suppose $t+\ell :=\max\{\m(u)+f_i \, :\, ue_i \in G(M)_{\ell}\,,
i=1, \ldots, r\}> k+\ell$. Hence, $\beta_{t, t +\ell}(M) \neq 0$, for $t>k$. This is a contradiction from Corollary \ref{cor:varie}, (b), whereupon
\[k + \ell = \max\{\m(u)+f_i \, :\, ue_i \in G(M)_{\ell}\,,
i=1, \ldots, r\}.\]

Suppose there exists $j>\ell$ and a squarefree monomial $ue_i\in G(M)_j$, for some $i\in [r]$, such that $\m(u)+f_i \geq k+j$. From (\ref{betti2}), then $\beta_{k, \, k+ j}(M)\neq 0$. Again a contradiction from Corollary \ref{cor:varie}, (b).

(2)$\Rightarrow$ (1). Since $k + \ell = \max\{\m(u)+f_i \, :\, ue_i \in G(M)_{\ell}\,,
i=1, \ldots, r\}$, then $\beta_{k, \, k+ \ell}(M)\neq 0$ and $\beta_{i, \, i+ \ell}(M)= 0$, for all $i>k$. On the other hand, $\m(u)+f_i < k+j$, for all $j > \ell$ and for all $ue_i \in G(M)_j$, implies $\beta_{k, \, k+ j}(M)= 0$. Therefore, from Corollary \ref{cor:varie}, we get the assert.
\end{proof}
As  consequences, we obtain the following corollaries.
\begin{Cor} \label{cor:uniq} Let $M = \oplus_{i=1}^rI_ie_i \subsetneq F$ be a squarefree stable
submodule and let $\beta_{k, \, k+ \ell}(M)$ an extremal Betti
number of $M$. Then
\[\beta_{k, \, k+ \ell}(M) = \vert\{ue_i \in G(M)_{\ell}\,:\, \m(u) + f_i=
k+\ell,\, i=1, \ldots, r\}\vert.\]


\end{Cor}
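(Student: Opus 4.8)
The plan is to specialize the Betti-number formula (\ref{betti2}) at $j=\ell$ and to show that, under the extremality hypothesis, every binomial coefficient appearing in the resulting sum collapses to either $0$ or $1$. First I would write out
\[
\beta_{k,\,k+\ell}(M)=\sum_{i=1}^{r}\sum_{u\in G(I_i)_{\ell-f_i}}\binom{\m(u)-\ell+f_i}{k},
\]
and recall the dictionary $ue_i\in G(M)_\ell$ if and only if $u\in G(I_i)_{\ell-f_i}$, so that this double sum is indexed exactly by the generators $ue_i$ of $G(M)_\ell$.

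The heart of the argument is to confine the entry $\m(u)-\ell+f_i$ of each binomial to the window $[0,k]$. For the lower bound, since $u$ is a squarefree monomial of degree $\ell-f_i$, its largest-index variable satisfies $\m(u)\geq \ell-f_i$, hence $\m(u)-\ell+f_i\geq 0$ and every binomial is genuine. For the upper bound I would invoke extremality: because $\beta_{k,\,k+\ell}(M)$ is extremal, Theorem \ref{equiv}(2) gives $k+\ell=\max\{\m(u)+f_i : ue_i\in G(M)_\ell,\ i=1,\dots,r\}$, which forces $\m(u)+f_i\leq k+\ell$, i.e. $\m(u)-\ell+f_i\leq k$, for every generator $ue_i\in G(M)_\ell$.

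With $0\leq \m(u)-\ell+f_i\leq k$ secured, the conclusion is immediate from the elementary evaluation $\binom{m}{k}=0$ for $0\leq m<k$ together with $\binom{k}{k}=1$: a summand contributes $1$ precisely when $\m(u)+f_i=k+\ell$ and $0$ in every other case. Since all summands are nonnegative, no cancellation occurs, so summing genuinely counts the generators rather than producing a mere congruence, and $\beta_{k,\,k+\ell}(M)$ equals $\vert\{ue_i\in G(M)_\ell : \m(u)+f_i=k+\ell\}\vert$, as asserted.

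The only real obstacle is this bookkeeping that pins the binomial argument into $[0,k]$; the upper bound is exactly the content of the extremality hypothesis transmitted through the maximum in Theorem \ref{equiv}, while the lower bound is the degree inequality $\m(u)\geq\deg u$ for squarefree monomials. Once both are in place the statement reduces to a one-line evaluation of binomial coefficients, so I do not anticipate any further difficulty.
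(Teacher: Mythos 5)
Your proof is correct and is exactly the intended derivation: the paper states this corollary without proof as a consequence of Theorem \ref{equiv} and formula (\ref{betti2}), and your argument---pinning $\m(u)-\ell+f_i$ into $[0,k]$ via the degree bound $\m(u)\geq\deg u$ and the maximality in Theorem \ref{equiv}(2), so that each binomial coefficient is $0$ or $1$---is the evident way to make that consequence explicit. No issues.
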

\begin{Cor} \label{unique} Let $M = \oplus_{i=1}^rI_ie_i \subsetneq F$ be a squarefree stable
submodule.

Set $$\ell = \max\{j:G(M)_j \neq \emptyset\}$$ and
$$m = \max\{m(u)+f_i\,:\, ue_i \in G(M), i=1, \ldots, r\}.$$
Then $\beta_{m-\ell,\,m}$ is the unique extremal Betti number
of $M$ if and only if
\[m = \max\{m(u)+f_i\,:\, ue_i \in G(M)_{\ell}, \,i=1, \ldots, r\},\]
and for every $w\in G(M)_j$, $j<\ell$, $\m(w) < m$.
\end{Cor}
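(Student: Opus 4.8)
The plan is to read everything through Theorem \ref{equiv}, organised by the degree filtration of $G(M)$. For each degree $d$ with $G(M)_d \neq \emptyset$ set
\[
\sigma_d = \max\{\m(u)+f_i : ue_i \in G(M)_d,\ i=1,\dots,r\},
\]
so that, by (\ref{betti2}), $\beta_{k,k+d}(M)\neq 0$ exactly when $\sigma_d \geq k+d$, and the top of the $d$-th strand sits in homological degree $\sigma_d-d$. In this language Theorem \ref{equiv} says that degree $d$ carries an extremal Betti number, necessarily the corner $(\sigma_d-d,\sigma_d)$, precisely when $\sigma_j-j<\sigma_d-d$ for every degree $j>d$ with $G(M)_j\neq\emptyset$. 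Because $\ell$ is the top degree, this condition is vacuous for $d=\ell$, so degree $\ell$ always produces the corner $(\sigma_\ell-\ell,\sigma_\ell)$.

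First I would dispatch the existence half. The first displayed hypothesis is exactly $\sigma_\ell=m$, i.e.\ the global maximum $m$ is attained already in the top degree; since $m=\max_d\sigma_d\geq\sigma_\ell$ always holds, this says the maximum is realised at degree $\ell$. Under it the always-present corner is $(\sigma_\ell-\ell,\sigma_\ell)=(m-\ell,m)$, so $\beta_{m-\ell,m}$ is extremal. Conversely, if $m$ were attained only in a lower degree then $\sigma_\ell<m$ and (\ref{betti2}) forces $\beta_{m-\ell,m}=0$, so this first hypothesis is also necessary. This part is routine and uses only Corollary \ref{cor:varie}.

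The uniqueness half is the crux. By the ordering of extremal Betti numbers recalled in the Introduction, any corner other than $(m-\ell,m)$ sits in some degree $d<\ell$; assuming such a second corner, the characterisation above (taking $j=\ell$) gives $m-\ell<\sigma_d-d$, while the hypothesis $\m(w)<m$ for all $w\in G(M)_j$ with $j<\ell$ gives $\sigma_d<m$, whence $m-\ell<\sigma_d-d<m-d$. This band contains no integer only when $\ell-d=1$, and here is where I expect the real difficulty: for $\ell-d\geq 2$ the bare inequality $\m(w)<m$ appears too weak to exclude a second corner. To close the gap I would have to bring in squarefree stability (Definition \ref{def:squarestable}(2)) to control not the \emph{values} $\m(u)+f_i$ but the \emph{differences} $\m(u)-\deg(u)$ of the low-degree generators, and to investigate whether the exchange property propagates a large such difference from a lower degree up to degree $\ell$, thereby upgrading the hypothesis to the sharper estimate $\sigma_d-d\leq m-\ell$. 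Only with such a monotonicity does the band collapse and the second corner become impossible, so establishing exactly this propagation is the step I would scrutinise most carefully, since the whole equivalence rests on it. The remaining converse implication, that a unique corner forces both displayed conditions, then follows by reversing these steps together with the count in Corollary \ref{cor:uniq}.
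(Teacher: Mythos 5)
Your reduction to Theorem \ref{equiv} via the quantities $\sigma_d=\max\{\m(u)+f_i: ue_i\in G(M)_d\}$ is exactly the intended reading (the paper gives no written proof, presenting the corollary as an immediate consequence of Theorem \ref{equiv}), and your treatment of the first displayed hypothesis and of the ``only if'' direction is correct. The gap is the one you flag yourself: the uniqueness half is left resting on an unproved ``propagation'' by which squarefree stability would upgrade $\sigma_d<m$ to $\sigma_d-d\leq m-\ell$. No such propagation exists, so this step cannot be completed. Concretely, in $S=K[x_1,\dots,x_6]$ take
\[
I=(x_1x_2,\;x_1x_3,\;x_1x_4,\;x_1x_5,\;x_2x_3x_4x_5,\;x_2x_3x_4x_6).
\]
This ideal is squarefree (even strongly) stable, with $\ell=4$, $m=6$, $\max\{\m(u):u\in G(I)_4\}=6=m$, and $\m(w)\leq 5<m$ for every $w\in G(I)_2$, so both hypotheses of the corollary hold. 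Yet by (\ref{AHHeq}) one computes $\beta_{3,5}(I)=\binom{3}{3}=1$, while $\beta_{i,i+2}(I)=0$ for $i\geq4$ and $\beta_{i,i+4}(I)=0$ for $i\geq3$; hence $\beta_{3,5}(I)$ and $\beta_{2,6}(I)$ are both extremal. Indeed the degree-sequence is $\bd(I)=(3,2)$, and the paper's own Construction \ref{sequence} yields $\bl(I)=2$, so Corollary \ref{unique} as printed even contradicts Section \ref{crit} of the same paper.

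The failure occurs exactly in the regime you isolated with the band $m-\ell<\sigma_d-d<m-d$, which is nonempty as soon as $\ell-d\geq2$. The hypothesis that actually characterizes uniqueness is the stronger one your analysis points to: $\m(u)+f_i-j\leq m-\ell$ (equivalently $\sigma_j-j\leq m-\ell$) for every $ue_i\in G(M)_j$ with $j<\ell$. With that replacement your argument closes immediately, since a second corner in degree $d<\ell$ would force $\sigma_d-d>\sigma_\ell-\ell=m-\ell$ by Theorem \ref{equiv}, and conversely the absence of any $d$ with $\sigma_d-d>m-\ell$ kills all candidate corners below degree $\ell$. So your proposal is incomplete as written, but the missing step is not yours to supply: it is a defect of the statement, and the right move is to record the counterexample and the corrected hypothesis rather than to search for the stability-based monotonicity.
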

%
\begin{Rem} \em \label{rem:free} Under the same hypotheses of Theorem \ref{equiv}, if $F\simeq S^r$, $\beta_{k, \, k+ \ell}(M)$ is extremal if and only if
\[k + \ell = \max\{\m(u)\, :\, ue_i \in G(M)_{\ell}\,,
i=1, \ldots, r\},
\]
and $\m(u) < k+j$, for all $j > \ell$ and for all $ue_i \in G(M)_j$.

Moreover,
\[\beta_{k, \, k+ \ell}(M) = \vert\{ue_i \in G(M)_{\ell}\,:\, \m(u)=
k+\ell,\, i=1, \ldots, r\}\vert.\]
\end{Rem}

\begin{Rem}\em
If $I$ is a squarefree stable monomial ideal of $S$ and $\beta_{k, k+\ell}(I)$ is an extremal Betti number of $I$, then from (\ref{betti2}) and Remark \ref{rem:free}, we have
\begin{equation}\label{diseq1}
    1\leq \beta_{k, k+\ell}(I) \leq \binom{k+\ell-1}{\ell-1}.
\end{equation}

In fact, there are exactly $\binom{k+\ell-1}{\ell-1}$ squarefree monomials of degree $\ell$ in $S$ with $\m(u) = k+\ell$.

If $M = \oplus_{i=1}^rI_ie_i \varsubsetneq F$ is a squarefree stable
submodule and $\beta_{k, \, k+ \ell}(M)$ is an extremal Betti
number of $M$, then there exist some squarefree stable ideals $I_{j_1},
\ldots, I_{j_t}$, $\{j_1, \ldots, j_t\}\subseteq [n]$, $ 1 \leq j_1 < j_2 < \cdots < j_t \leq
r$, with $(k, \ell -f_{j_1}), \ldots, (k, \ell -f_{j_t})$ as corners.

Then
\begin{equation*}
    \beta_{k, \, k+ \ell}(M) = \sum_{i=1}^t \beta_{k, \, k+
\ell}(I_{j_i}e_{j_i}) = \sum_{i=1}^t \beta_{k, \, k+ \ell-
f_{j_i} }(I_{j_i}),
\end{equation*}
and
\begin{equation*}
    1 \leq \beta_{k, \, k+ \ell}(M) \leq \sum_{i=1}^t \binom{k + \ell -
f_{j_i} -1}{\ell- f_{j_i} -1},
\end{equation*}
where $\binom{k + \ell - f_{j_i}
-1}{\ell - f_{j_i} -1}$ is the number of all squarefree monomials $u$ of $S$
of degree $\ell - f_{j_i}$ with $\m(u) =k+\ell$.
\end{Rem}

\section{A criterion for determing extremal Betti numbers} \label{crit}
In this Section, we describe a criterion for determining the extremal Betti numbers of a squarefree stable submodule of the graded free $S$-module $S^r$, $r\geq 1$.

Let $M=\oplus_{i=1}^rI_ie_i \subsetneq S^r $ be a squarefree stable submodule.

If $M$ is generated in one degree $d$, then $M$ has a unique extremal Betti number $\beta_{m-d,\,m}(M)$, where $m= \max\{ \m(u)\,:\, ue_i \in G(M)
, \, i=1, \ldots, r \}$.

Let $M$ be generated in degrees $1\leq d_1 < d_2 < \cdots < d_t \leq n$.

Set
\[m_{d_j} = \max\{\m(u)\,:\, ue_i \in G(M)_{d_j}, \, i=1, \ldots, r\},\]
for $j=1, \ldots, t$.

Consider the following sequence of non negative integers associated to $M$:
\begin{equation}\label{degseq1}
   \bd(M) =(m_{d_1}-d_1, m_{d_2}-d_2, \ldots, m_{d_t}-d_t).
\end{equation}
We call it the \textit{degree-sequence} of $M$.

\begin{Rem} \em Assume that for some $j \in [t]$, $G(M)_{d_j} = \{x_1\cdots x_{d_j}\}$. Then $m_{d_j}-d_j=0$.

If $F=S$, and consequently $M$ is a squarefree monomial ideal in $S$, then
$m_{d_i}-d_i>0$, for $i=2, \ldots, t$. Moreover, if $G(M)_{d_1}=G(M)_{\indeg(M)} = \{x_1\cdots x_{\indeg(M)}\}$, then $m_{d_1}-d_1=0$.
\end{Rem}
We can observe that, if
\begin{equation}\label{disdegree}
m_{d_1}-d_1 > m_{d_2}-d_2> \cdots > m_{d_t}-d_t,
\end{equation}
then, from Theorem \ref{equiv}, for $i=1, \ldots, t$, $\beta_{m_{d_i}-d_i,\,m_{d_i}}(M)$ is an extremal Betti number of $M$.

Suppose that (\ref{disdegree}) does not hold.

We construct a suitable subsequence of the \textit{degree-sequence} $\bd(M)$:
\begin{equation}\label{subseq}
    \widehat{\bd(M)}=(m_{d_{i_1}}-d_{i_1}, m_{d_{i_2}}-d_{i_2}, \ldots, m_{d_{i_q}}-d_{i_q}),
\end{equation}
with $d_1\leq d_{i_1} < d_{i_2}< \cdots <d_{i_q}= d_t$ and such that, for $j=1, \ldots, q$, $\beta_{m_{d_{i_j}}-d_{i_j},\,m_{d_{i_j}}}(M)$ is an extremal Betti number of $M$.
\begin{Constr} \label{sequence} The admissible subsequence $\widehat{\bd(M)}$ is obtained as follows:
\begin{enumerate}
\item[] $d_{i_1}$ is the smallest degree of a monomial generator of $M$ such that
\[\mbox{$m_{d_{i_1}}-d_{i_1} > m_{d_j}-d_j$, for all $j>i_1$};\]
\item[] $d_{i_2}>d_{i_1}$ is the smallest degree of a monomial generator of $M$ such that
\[\mbox{$m_{d_{i_2}}-d_{i_2} > m_{d_j}-d_j$, for all $j>i_2 >i_1$};\]
\item[] $\ldots \ldots$\\
\item[] $d_{i_{q-1}}>d_{i_{q-2}}$ is the smallest degree of a monomial generator of $M$ such that
\[\mbox{$m_{d_{i_{q-1}}}-d_{i_{q-1}} > m_{d_j}-d_j$, for all $j>i_{q-1}> \cdots >i_1$};\]
\item[] $d_{i_q} = d_t$.
\end{enumerate}

\medskip

It is $m_{d_{i_1}}-d_{i_1} > m_{d_{i_2}}-d_{i_2} > \cdots > m_{d_{i_q}}-d_{i_q}$, and Theorem \ref{equiv} guarantees that
\[\beta_{m_{d_{i_1}}-d_{i_1},\,m_{d_{i_1}}}(M), \beta_{m_{d_{i_2}}-d_{i_2},\,m_{d_{i_2}}}(M),\ldots, \beta_{m_{d_{i_q}}-d_{i_q},\,m_{d_{i_q}}}(M)\]
are extremal Betti numbers of $M$.

The integer $q$ is called the \textit{degree-length} of $M$ and
gives the number of the extremal Betti numbers of the squarefree stable submodule $M$. We will denote such integer by $\bl(M)$.
\end{Constr}
\begin{Expls} (1) Let $S= K[x_1, x_2, x_3, x_4,x_5, x_6, x_7]$ and let
\[I = (x_1x_2, x_1x_3,x_1x_4,x_1x_5,x_1x_6,x_2x_3x_4, x_2x_3x_5,x_2x_4x_5, x_2x_3x_6x_7, x_3x_4x_5x_6x_7)\]
be a squarefree strongly stable ideal of $S$.

The \textit{degree-sequence} of $I$ is
\begin{equation*}
    \bd(I)=(m_2-2,m_3-3,m_4-4,m_5-5)=(4,2,3,2).
\end{equation*}

Following Construction \ref{sequence}, the admissible subsequence $\widehat{\bd(I)}$ is $(4,3,2)$ and $\bl(I)=3$. The extremal Betti numbers of $I$ are
\[\beta_{6-2,6}(I), \beta_{7-4,7}(I),\beta_{7-5,7}(I),\] as the Betti table of $I$ shows:
\[
\begin{array}{lllllll}
    &   & 0 & 1 & 2 & 3 & 4 \\
\hline
  2 & : & 5 & 10 & 10 & 5 & 1 \\
  3 & : & 3 & 5 & 2 & - & -   \\
  4 & : & 1 & 3 & 3 & 1 & - \\
  5 & : & 1 & 2 & 1 & - & -
\end{array}
\]
\par\noindent
(2) Let $S = K[x_1, x_2,x_3,x_4,x_5,x_6]$ and $F \simeq S^4$, $e_1 = (1, 0, 0, 0)$,
$e_2 = (0, 1, 0, 0)$, $e_3 = (0, 0, 1, 0)$, $e_4 = (0, 0, 0, 1)$. Let
\[M = I_1e_1 \oplus I_2e_2 \oplus I_3e_3 \oplus I_4e_4\]
with
\[I_1 = (x_1x_2,x_1x_3,x_1x_4, x_2x_3), \]
\[I_2 = (x_1x_2x_3, x_1x_2x_4, x_1x_3x_4, x_1x_3x_5, x_1x_3x_6, x_2x_3x_4, x_2x_3x_5,x_2x_3x_6),\]
\[I_3 = (x_1x_2x_3x_4, x_1x_2x_3x_5, x_1x_2x_4x_5, x_1x_2x_4x_6, x_2x_3x_4x_5), I_4 = (x_1x_2x_3x_4x_5),\]
be a squarefree stable submodule of $F$.

The \textit{degree-sequence} of $M$ is
\begin{equation*}\label{es2}
   \bd(M)= (m_2-2,m_3-3,m_4-4, m_5-5)=(2,3,2,0).
\end{equation*}

Following Construction \ref{sequence}, the admissible subsequence $\widehat{\bd(M)}$ is $(3,2,0)$ and $\bl(M)=3$. Indeed, the extremal Betti numbers of $M$ are
\[\beta_{6-3,6}(M), \beta_{6-4,6}(M), \beta_{5-5,5}(M),\] as one can read on its Betti table:

\[\begin{array}{llllll}
    &   & 0 & 1 & 2 & 3 \\
\hline
  2 & : & 4 &4 & 1 & - \\
  3 & : & 8 & 13 & 8 & 2    \\
  4 & : & 5 & 5 & 1 & - \\
  5 & : & 1 & -& -& -
\end{array}\]
\end{Expls}

\section{An application}\label{appl}
In this Section, we consider some special extremal Betti numbers of squarefree lexicographic submodules.

If $M$ is a finitely generated graded $S=K[x_1, \ldots, x_n]$-module, then $\beta_{i,\,j}(M)=0$, for all $i$ and $j>n$. Therefore, $\beta_{i,\,n}(M)$ is an extremal Betti number if
$\beta_{i,\,n}(M)\neq 0$. Such extremal Betti numbers are called \textit{super extremal} \cite{AHH3}.

The pair $(i, n-i)$ is called a \textit{super corner}.


Let $M = \oplus_{i = 1}^r I_i e_i\subsetneq
S^r$, $r\geq 1$, be a squarefree stable submodule and $G(M)$ its minimal system of monomial generators.

Set
\[b_i=\vert \{ue_j \in G(M)_{n-i}\,:\,\m(u)=n, \, j=1, \ldots, r\}\vert,\]
then $\beta_{i,\,n}(M)=b_i$, for $i=1, \ldots, n-1$. We call
\[\bb(M) = (b_0, b_1, \ldots, b_{n-1})\]
the $\bb$-vector of $M$.

For $\bb(I) = (b_0, b_1, \ldots, b_{n-1})$, we define the \textit{support} of $\bb(M)$ to be the following set:
\[\supp(\bb(M))=\{i \in \{0, 1, \ldots,n-1\}\,:\,b_i\neq 0\}.\]

For every $u\in \{x_1, \ldots, x_n\}^d$, we denote by $\langle u \rangle$ the squarefree lexsegment ideal of degree $d$ in $S$ defined as follows:
\[\langle u \rangle= (w\in \{x_1, \ldots, x_n\}^d\,:\, w \geq_{\textrm{slex}} u).\]
\begin{Lem} \label{char} Let $n\geq 3$ and $X$ a non-empty subset of $\{0, 1, \ldots,n-1\}$.

For every integer $r \geq \vert X\vert$ there exists a squarefree lexicographic submodule $\mathcal{L}\subsetneq S^r$ with $\supp(\bb(\mathcal{L}))=X$.
    \end{Lem}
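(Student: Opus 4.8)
The plan is to produce $\mathcal{L}$ by an explicit construction using full powers of the maximal ideal. Since $F\simeq S^r$ forces $f_1=\cdots=f_r=0$, Proposition \ref{lex} says that a squarefree monomial submodule $\bigoplus_{j=1}^r I_j e_j$ of $S^r$ is squarefree lexicographic exactly when each $I_j$ is a squarefree lexsegment ideal and $[x_1,\dots,x_n]^{\indeg I_i}\subseteq I_{i-1}$ for $i=2,\dots,r$. The guiding observation is that the full powers $[x_1,\dots,x_n]^c$ are the most convenient lexsegment ideals here: in each degree their graded piece is either empty or all of $\{x_1,\dots,x_n\}^c$, so they are trivially lexsegment (indeed $[x_1,\dots,x_n]^c=\langle x_{n-c+1}\cdots x_n\rangle$); their minimal generators all sit in the single degree $c$; and exactly $\binom{n-1}{c-1}$ of these generators are divisible by $x_n$, hence have $\m(u)=n$ and contribute to $b_{n-c}$.

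Concretely, I would first write $X=\{a_1<a_2<\cdots<a_s\}$ with $s=|X|\le r$ and choose a nondecreasing sequence of exponents $1\le c_1\le c_2\le\cdots\le c_r\le n$ whose set of values is precisely $\{\,n-a_s,\,n-a_{s-1},\,\dots,\,n-a_1\,\}$. This is possible exactly because $r\ge s$: assign each of the $s$ prescribed values once and pad the remaining $r-s$ slots by repeating values already used, keeping the sequence nondecreasing. I then set
\[
\mathcal{L}=\bigoplus_{j=1}^r [x_1,\dots,x_n]^{c_j}\,e_j
=\bigoplus_{j=1}^r \langle x_{n-c_j+1}\cdots x_n\rangle\, e_j .
\]
Each summand $[x_1,\dots,x_n]^{c_j}$ with $c_j\ge 1$ is a proper squarefree lexsegment ideal, so $\mathcal{L}\subsetneq S^r$ and part (i) of Proposition \ref{lex} holds.

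Next I would verify part (ii). Since $\indeg[x_1,\dots,x_n]^{c_i}=c_i$ and $(c_j)$ is nondecreasing, we obtain $[x_1,\dots,x_n]^{c_i}\subseteq[x_1,\dots,x_n]^{c_{i-1}}=I_{i-1}$ for all $i\ge 2$, which is exactly the required inclusion; hence $\mathcal{L}$ is a squarefree lexicographic submodule of $S^r$. Finally I would read off the $\bb$-vector from the generators: every minimal generator of $[x_1,\dots,x_n]^{c_j}$ has degree $c_j$ (a squarefree monomial of higher degree is a multiple of one in degree $c_j$), and $\m(u)=n$ holds for exactly $\binom{n-1}{c_j-1}\ge 1$ of them, so aggregating over components gives $b_i=\sum_{j:\,c_j=n-i}\binom{n-1}{c_j-1}$. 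Thus $b_i\neq 0$ if and only if $n-i$ occurs among $c_1,\dots,c_r$, that is, if and only if $i\in X$, whence $\supp(\bb(\mathcal{L}))=X$.

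The computation of the $\bb$-vector is routine once the construction is fixed, so the only genuinely delicate point is to meet the lexicographic inclusion (ii) and to prescribe the degrees carrying the $\m(u)=n$ generators at the same time. Full powers dispose of the first demand for free, while the nondecreasing choice of exponents reconciles it with the second; the hypothesis $r\ge|X|$ is precisely what makes room for such a choice, and it is the only hypothesis actually used in this argument (the bound $n\ge 3$ merely guarantees a nondegenerate range for $X$).
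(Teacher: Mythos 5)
Your proposal is correct and follows essentially the same route as the paper: both arguments invoke Proposition \ref{lex} and build $\mathcal{L}$ from full powers $[x_1,\ldots,x_n]^{c}$ arranged in nondecreasing degree so that the inclusion condition holds automatically and the generator degrees realize exactly the values $n-a$ for $a\in X$, with $\binom{n-1}{c-1}\geq 1$ generators divisible by $x_n$ in each occupied degree. The only (inessential) difference is that the paper replaces the last component by the proper lexsegment $\langle x_1x_2\cdots x_{n-k_1-1}x_n\rangle$ instead of a full power, which changes the value of $b_{k_1}$ but not the support.
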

    \begin{proof} Set $X = \{k_1, k_2, \ldots, k_t\}$ with $k_1 < k_2 < \cdots < k_t$ and $r \geq t$.

By Proposition \ref{lex}, an admissible squarefree lexicographic submodule $\mathcal{L}\subsetneq S^r$ such that $\supp(\bb(\mathcal{L}))=X$ is:
\[\mathcal{L} = \oplus_{j=0}^{t-2}[x_1,\ldots,x_n]^{n-k_{t-j}}e_{j+1} \oplus\left(\oplus_{i=t}^{r-1}[x_1,\ldots,x_n]^{n-k_1}e_i\right)\oplus \langle x_1x_2\cdots x_{n-k_1-1}x_n \rangle e_r.\]

Note that  $\bb(\mathcal{L}) = (b_0, \ldots, b_{n-1})$, where $b_i = 0$, for $i\in \{0, 1, \ldots,n-1\} \setminus X$, $b_{k_j} = \binom{n-1}{n-k_j-1}$, for $j=2, \ldots, t$ and $b_{k_1} = 1+(r-t)\binom{n-1}{n-k_1-1}$.

\end{proof}
\begin{Prop} Given three integers $n$, $t$, $r$ such that $n\geq 3$, $1\leq t\leq n-1$, $r\geq t$,
$t$ pairs of integers $(k_1, \ell_1), (k_2, \ell_2), \ldots, (k_t, \ell_t)$,
with
\[0\leq k_t < k_{t-1} < \cdots < k_1\leq n-1,\qquad 1 \leq \ell_1 < \ell_2 < \cdots < \ell_t\leq n \]
and such that, for $i=1,\ldots,t$, $k_i+\ell_i=n$.

Then there exists a squarefree lexicographic submodule $\mathcal{L}\subsetneq S^r$, generated in degrees $\ell_1, \ell_2, \ldots, \ell_t$ with
$(k_1, \ell_1), (k_2, \ell_2),\ldots,(k_t, \ell_t)$ as super corners.
\end{Prop}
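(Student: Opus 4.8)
The plan is to obtain $\mathcal{L}$ as an immediate consequence of Lemma~\ref{char}, re-expressed in the coordinates $(k_i,\ell_i)$. First I would set $X=\{k_1,k_2,\ldots,k_t\}$. The strict chain $0\leq k_t<k_{t-1}<\cdots<k_1\leq n-1$ shows that $X$ is a nonempty subset of $\{0,1,\ldots,n-1\}$ with $|X|=t$, so the hypothesis $r\geq t$ is precisely the condition $r\geq|X|$ required by Lemma~\ref{char}. I would therefore take $\mathcal{L}\subsetneq S^r$ to be the explicit squarefree lexicographic submodule produced in the proof of that lemma, for which $\supp(\bb(\mathcal{L}))=X$; its squarefree lexicographic property and the properness $\mathcal{L}\subsetneq S^r$ then come for free.

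Next I would read off the super corners. Since $\beta_{i,n}(\mathcal{L})=b_i$, one has $\supp(\bb(\mathcal{L}))=\{i:\beta_{i,n}(\mathcal{L})\neq 0\}$, and any nonzero $\beta_{i,n}(\mathcal{L})$ is automatically super extremal. Hence the super corners of $\mathcal{L}$ are exactly the pairs $(k_i,n-k_i)$, which by the hypothesis $k_i+\ell_i=n$ are precisely $(k_1,\ell_1),\ldots,(k_t,\ell_t)$, and there are no others because $\supp(\bb(\mathcal{L}))$ equals $X$ exactly. It then remains to identify the generating degrees: the summands $[x_1,\ldots,x_n]^{n-k_{t-j}}e_{j+1}$ for $j=0,\ldots,t-2$ contribute generators in degrees $n-k_t,n-k_{t-1},\ldots,n-k_2$, while the repeated summands $[x_1,\ldots,x_n]^{n-k_1}e_i$ and the lexsegment tail $\langle x_1\cdots x_{n-k_1-1}x_n\rangle e_r$ each contribute in degree $n-k_1$, since $\deg(x_1\cdots x_{n-k_1-1}x_n)=n-k_1$. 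Substituting $\ell_i=n-k_i$ then shows that the set of generating degrees is exactly $\{\ell_1,\ldots,\ell_t\}$.

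Because the statement is essentially Lemma~\ref{char} transported through the substitution $\ell_i=n-k_i$, I do not expect a genuine obstacle; the only step demanding care is the final degree count. There one must track the index reversal $j\mapsto t-j$ built into the construction, reconcile it with the opposite orderings of the $k_i$ and the $\ell_i$, and verify that the repeated middle summands together with the lexsegment tail introduce no generating degree beyond $\ell_1$. Everything else --- the lexicographic property, the identity $\beta_{i,n}(\mathcal{L})=b_i$, and the correspondence between $\supp(\bb(\mathcal{L}))$ and the super corners --- is supplied directly by Lemma~\ref{char} and the definitions recalled above.
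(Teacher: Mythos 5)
Your proof is correct and takes essentially the same route as the paper: set $X=\{k_1,\dots,k_t\}$, invoke Lemma~\ref{char} together with the explicit module constructed in its proof, and translate through $\ell_i=n-k_i$ to read off the generating degrees and the super corners; the paper merely writes that module directly in the $\ell_i$-coordinates as $\oplus_{i=1}^{t-1}[x_1,\dots,x_n]^{\ell_i}e_i\oplus\left(\oplus_{i}[x_1,\dots,x_n]^{\ell_t}e_i\right)\oplus\langle x_1\cdots x_{\ell_t-1}x_n\rangle e_r$. One small slip in your closing remark: after the index reversal the repeated middle summands and the lexsegment tail sit in degree $\ell_t$ (the largest generating degree), not $\ell_1$.
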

\begin{proof} Set $X = \{n-{\ell_t}, n-{\ell_{t-1}},\ldots, n-{\ell_1}\} = \{k_t, k_{t-1}, \ldots, k_1\}$. One has that $X$ is a non-empty subset of $\{0, 1, \ldots,n-1\}$ and from
Lemma \ref{char} the squarefree monomial submodule of $S^r$:
\[
\mathcal{L} = \oplus_{i=1}^{t-1}[x_1,\ldots,x_n]^{\ell_i}e_i \oplus\left(\oplus_{i=t+1}^{r-1}[x_1,\ldots,x_n]^{\ell_t}e_i\right)
\oplus \langle x_1x_2\cdots x_{\ell_t-1}x_n \rangle e_r
\]
is a squarefree lexicographic submodule generated in degrees $\ell_1, \ell_2, \ldots, \ell_t$ and such that $\supp(\bb(\mathcal{L})) = X$.
Thus, $b_{k_i} = \beta_{k_i, n}$ are super extremal Betti numbers of $\mathcal{L}$, and the assert follows.
\end{proof}

\begin{Rem}\em Let $n=2$. If one considers the pair $(k, \ell) = (0,2)$, then there exists the squarefree lexicographic submodule $\mathcal{L} = \oplus_{i=1}^r(x_1x_2)e_i$ with $\bb(\mathcal{L})=(r,0)$.

If one considers $(k, \ell) = (1,1)$, then there exists the squarefree lexicographic submodule $\mathcal{L} = \oplus_{i=1}^r(x_1,x_2)e_i$ with $\bb(\mathcal{L})=(0, r)$.

\end{Rem}

\enddocument